\theoremstyle{plain}
\theoremstyle{definition}
\newtheorem{theorem}{Theorem}[section]
\newtheorem{remark}[theorem]{Remark}
\newtheorem{lemma}[theorem]{Lemma}
\newtheorem{definition}[theorem]{Definition}
\newtheorem{case}{Case}
\newtheorem{proposition}[theorem]{Proposition}
\newtheorem*{maintheorem*}{Main Theorem}
\newcommand{\N}{\mathbb{N}}
\title{Brick-finite skew-gentle algebras are representation-finite}
\author{Monica Garcia}
\address{LACIM, UQAM-Universit\'e Laval, {Montr\'eal}, {Canada}}
\email{garcia\_gallegos.monica\_del\_rocio@uqam.ca}
\author{Léa Lavoué}
\address{UVSQ, Universit\'e Paris-Saclay, Versailles, France}
\email{lea.lavoue@ens.uvsq.fr}
\begin{document}
\begin{abstract}
    We show that a skew-gentle algebra is brick-finite if and only if it is representation-finite, generalizing Plamondon’s original result for gentle algebras
\end{abstract}

\maketitle

\tableofcontents

\section{Introduction}
Bricks, or Schur representations, have long been an object of study in the representation theory of finite-dimensional algebras, where they appear as natural generalizations of simple modules \cite{gabriel1962des}. More recently, they have become central to several areas, notably in $\tau$-tilting theory, torsion theory, stability conditions, and the theory of $g$-vector fans. An algebra is said to be \emph{brick-finite} if it admits only finitely many bricks up to isomorphism. While every representation-finite algebra is clearly brick-finite, the converse is false in general and holds only under additional structural assumptions. This has led to a body on literature treating the distribution of bricks for a given algebra, and their implications. A summary of fundamental conjectures posed in recent years on brick-finiteness can be found in \cite{mousavand2025bricks}. 
\smallskip

An interesting example where brick-finiteness coincides with representation-finiteness is provided by \emph{gentle algebras}, a well-known class of string algebras introduced in \cite{assem2006iterated}. Using the combinatorial classification of their indecomposable modules, Plamondon proved that brick-finiteness implies representation-finiteness in this setting \cite{plamondon2019gentle}.
\smallskip

\emph{Skew-gentle} algebras (also referred as \emph{skewed}-gentle algebras) are a generalization of gentle algebras, which were originally introduced in \cite{Geiss1999skgentle}. 
While they retain much of the combinatorial flavor of gentle algebras, skew-gentle algebras exhibit new phenomena, notably the presence of special vertices and additional local structure. 
Their indecomposable modules were also completely classified when the base field is of characteristic different from two in \cite{CB1989}. 
As for gentle algebras, skew-gentle algebras and their indecomposable modules can also be studied through geometric models \cite{labardini2022derived, he2023geometric,AmiotBrustle2019}, specifically, through to surfaces with orbifolds. 
\smallskip

In this short note, we generalize Plamondon's result to skew-gentle algebras. Our main result is the following. 
\begin{maintheorem*}[\ref{thm:main}]
    Let $A \cong \Bbbk Q^{sp}/I^{sp}$ be a skew-gentle algebra over a field $\Bbbk$ of with $\operatorname{char}(\Bbbk) \neq 2$. Then, $A$ is brick-finite if and only if it is representation-finite.
\end{maintheorem*} 

While finalizing this manuscript, we were made aware of parallel work by Chang, Jin, Schroll, and Wang \cite{ChangJinSchrollWang2025}. In Section 3 of their paper, the authors investigate a closely related family of skew-gentle algebras to those considered in Section \ref{sec:min-bands} of this note. They show that these algebras are brick-infinite under the additional assumption that the base field is algebraically closed, from which a result analogous to Theorem \ref{thm:main} follows in this setting. They further study silting-discreteness of skew-gentle algebras via their geometric model.

\section*{Acknowledgements}
This work was carried out as part of Léa Lavoué's master's thesis \cite{Lea2025memoire} under the supervision of Thomas Brüstle and Monica Garcia. We would like to thank Thomas Brüstle for his wise advice, his availability, and his kind support throughout this project. MG thanks Kaveh Mousavand for his suggestions and ideas on minimal brick-infinite skew-gentle algebras. The authors are grateful for the financial support provided by the Canadian Mathematical Society for their participation in its 2025 summer meeting in Quebec City, where part of this work was presented as a poster \cite{Lea2025poster}. 
\section{Preliminaries on skew-gentled algebras}\label{Preliminaries}
The notion of skew-gentle algebra was originally introduced by de la Peña and Gei\ss\ in \cite{Geiss1999skgentle}. They are generalizations of \emph{gentle algebras}, in the sense that they arise as skewed versions of gentle algebras; see \cite{AmiotBrustle2019} for details. Since their apparition, many equivalent definitions have been used to study them, and for the purposes of this work, we will go back and forth between two of them. We first recall the definition of a gentle algebra.

\begin{definition}
    An algebra $A$ is said to be \emph{gentle} if it is isomorphic to a path algebra $\Bbbk Q/I$ where $Q = (Q_1, Q_0)$ is a finite quiver and $I$ is an admissible ideal, which together satisfy the following conditions.
    \begin{itemize}[label=--]
        \item Any vertex $v \in Q_0$ is the source (resp. the target) of at most two arrows.
        \item For any arrow $\alpha \in Q_1$, there exists at most one arrow $\beta$ and at most one arrow $\gamma$ such that $\beta \alpha \notin I$ and $\alpha \gamma \notin I$. 
        \item For any arrow $\alpha' \in Q_1$, there exists at most one arrow $\beta'$ and at most one arrow $\gamma'$ such that $\beta' \alpha' \in I$ and $\alpha' \gamma' \in I$. 
    \end{itemize}
\end{definition}

The following definition of skew-gentle algebra is drawn from \cite{AmiotBrustle2019}. 

\begin{definition}\label{def-skewgentle}
    An algebra $A$ is said to be skew-gentle if $A \cong \Bbbk Q^{sp}/I^{sp}$, where $Q^{sp} = (Q^{sp}_0, Q^{sp}_1)$ is a finite quiver and $I^{sp}$ is an ideal which together satisfy the following conditions:
\begin{enumerate} \setlength\itemsep{0.5em}
\item $S_p \subseteq Q_0$ is a subset of \emph{special vertices}  such that for every $i \in S_p$ there exists a \emph{special loop} $\varepsilon_i$.
\item $Q^{sp}_0 = Q_0$, $Q^{sp}_1 = Q_1 \sqcup \{ \varepsilon_i \mid i \in S_p \}$.
\item $I^{sp} = I \sqcup \{ \varepsilon_i^2 - e_i \mid i \in S_p \}$, where $I$ is an ideal generated by paths of length two.
\item $\Bbbk Q^{sp}/\langle I \sqcup \{ \varepsilon_i^2 \mid i \in S_p\} \rangle$ is a gentle algebra.
\end{enumerate}
\end{definition}
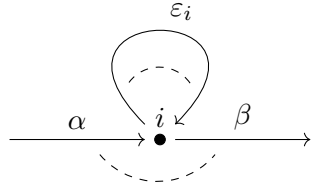
\begin{figure}[h!]
    \begin{tikzpicture}
    \node[] (i) at (0,0) {\phantom{$i$}};
    \draw[fill=black] (0,0) circle (2pt);
    \node at (0, 0.3) {$i$};
    \draw[->] (-2,0) -- node[above] {$\alpha$} (i);
    \draw[->] (i) -- node[above] {$\beta$} (2,0);
    
    \draw[->] (i) to[out=135,in=45,looseness=15] node[above right] {$\varepsilon_i$} (i);

    \draw[dashed] (-.8,-0.2) arc (220:320:1);
    \draw[dashed] (0.4,0.75) arc (35:150:0.5);
\end{tikzpicture}

    \caption{Local diagram of the maximal configuration around a special vertex $i$ for a skew-gentle algebra.} \label{fig-gentle}
\end{figure}

The definition of a skew-gentle algebra can be reformulated in terms of its associated \emph{polarized quiver}, following \cite{Geiss2023}. This is the context in which the indecomposable modules of skew-gentle algebras where originally classified by Crawley-Boevey in \cite{CB1989}. For the sake of clarity and conciseness, we will abstain from introducing the complete definition of polarized quiver. The reader can find more details in \cite[Sections 2, 3 and 5]{Geiss2023}. 
\smallskip

For our purposes, it will also be useful to recall how a skew-gentle algebra can be described in terms of a quiver $\tilde{Q}$ and an admissible ideal $\tilde{I}$ (note that $I^{sp}$ is not admissible, since it is not contained in the two-sided ideal generated by all paths of length two of $\Bbbk Q^{sp}$). Let $Q^{sp} = (Q^{sp}_0, Q^{sp}_1)$ be as in Definition \ref{def-skewgentle}. For any $i \in Q^{sp}_0$, let $S(i) = \{+,-\}$ if $i \in S_p$ and $S(i) = \{o\}$ otherwise. A skew-gentle algebra $\Bbbk Q^{sp}/I^{sp}$ is isomorphic to the algebra $\Bbbk \tilde{Q}/\tilde{I}$, where:
\smallskip
\begin{gather*}
    \tilde{Q}_0 = \{ i_\rho \mid i \in Q^{sp}_0 \text{ and } \rho \in S(i)\} \\
    \vspace{2mm}
    \tilde{Q}_1 = \{ {}_{\tau}\alpha_{\sigma} \mid \alpha \in Q_1, \tau \in S(t(\alpha)),\ \sigma \in S(s(\alpha)) \} \\
    s({}_{\tau}\alpha_{\sigma}) = (s(\alpha), \sigma) \quad t({}_{\tau}\alpha_{\sigma}) = (t(\alpha), \tau). 
\end{gather*}

In other words, the quiver $\tilde{Q}$ is the result of duplicating each special vertex $i \in S_p$, and replacing each special loop with a family of at most 2 arrows (see \cref{fig:polarized-quiver} for an example).  Moreover, the ideal $\tilde{I}$ is induced by the relations given by $I$, while accounting for the duplication of arrows. Explicitly, we have:

$$\tilde{I} = \left\langle\displaystyle \bigcup_{\alpha\beta \in I} \left\{ \sum_{\sigma \in S(t(\beta))} {}_{\tau}\alpha_{\sigma}\ {}_{\sigma}\beta_{\rho} \mid \tau \in S(t(\alpha)),\ \rho \in S(s(\beta)) \right\}
 \right\rangle.$$ 
 
\begin{figure}[h!]
    \centering
    \begin{center}
\begin{tikzpicture}
\node (i1) at (-1.5,0) {$\bullet$};
\node[above] at (i1) {$i$};
\node (j1) at (1.5,0) {$\bullet$};
\node[above] at (j1) {$j$};
\draw[->] (i1) -- node[above] {$\alpha$} (j1);
\node at (0,-2) {$i$ and $j$ ordinary vertices};

\node (ip) at (5,0.8) {$\bullet$};
\node[above] at (ip) {$i_+$};
\node (im) at (5,-0.8) {$\bullet$};
\node[below] at (im) {$i_-$};
\node (j2) at (8,0) {$\bullet$};
\node[above] at (j2) {$j$};
\draw[->] (ip) -- node[above] {${}_{+}\alpha$} (j2);
\draw[->] (im) -- node[below] {${}_{-}\alpha$} (j2);
\node at (6.25,-2) {$i$ special and $j$ ordinary vertices};

\node (i2) at (-1.5,-4) {$\bullet$};
\node[above] at (i2) {$i$};
\node (jp) at (1.5,-3.3) {$\bullet$};
\node[above] at (jp) {$j_+$};
\node (jm) at (1.5,-4.8) {$\bullet$};
\node[below] at (jm) {$j_-$};
\draw[->] (i2) -- node[above] {$\alpha_+$} (jp);
\draw[->] (i2) -- node[below] {$\alpha_-$} (jm);
\node at (0,-6) {$i$ ordinary and $j$ special vertices};

\node (ip2) at (5,-3.3) {$\bullet$};
\node[above] at (ip2) {$i_+$};
\node (im2) at (5,-4.8) {$\bullet$};
\node[below] at (im2) {$i_-$};
\node (l1) at (5.5,-3.9) {${}_{+}\alpha_-$};
\node (jp2) at (8,-3.3) {$\bullet$};
\node[above] at (jp2) {$j_+$};
\node (l2) at (7.3,-3.9) {${}_{-}\alpha_+$};
\node (jm2) at (8,-4.8) {$\bullet$};
\node[below] at (jm2) {$j_-$};
\draw[->] (ip2) -- node[above] {${}_{+}\alpha_+$} (jp2);
\draw[->] (ip2) -- (jm2);
\draw[->] (im2) -- (jp2);
\draw[->] (im2) -- node[below] {${}_{-}\alpha_-$} (jm2);
\node at (6.5,-6) {$i$ and $j$ special vertices};

\end{tikzpicture}
\end{center}
    \caption{The different configurations in $\tilde{Q}$ associated to an arrow $\alpha : i \rightarrow j \in Q^{sp}_1$. To simplify notation, we omit the $o$ index associated to non-special vertices.}
    \label{fig:polarized-quiver}
\end{figure}

\subsection{Indecomposable modules of skew-gentle algebras}
Recall that, as in the gentle case, the indecomposable modules of a skew-gentle algebra $A$ are classified by a subset of its strings and bands, specifically, those that are \emph{admissible}, which we denote by $\text{Adm}(Q^{sp})$ (see \cite[Definition 3.2]{Geiss2023}). Since the precise definition uses the notion of the associated polarized quiver, we skip it here. However, we recall that an admissible string $\omega = w_1 \cdots w_n$ of $Q^{sp}$ is said to be of type $(r,s) \in \{u, p\}^2$, with $r = p$ if and only if $t(w_1) \in S_p$ and $w_1$ is not a special loop, and $s = p$ if and only if $s(w_n) \in S_p$ and $w_n$ is not a special loop. 
\smallskip

For every $x \in \text{Adm}(Q^{sp})$, there is an associated finite-dimensional algebra $A_x$ as in the table below. Each algebra $A_x$ comes equipped with an automorphism $\iota$, which we also specify. 
{\renewcommand{\arraystretch}{1.8}
\begin{table}[H]
    \begin{tabular}{|c|c|c|}
        \hline
        Type of $x$ & $A_x$ & Autom. $\iota$\\ \hline
        $(u,u)$ & $\Bbbk$ & Id\\ \hline
        $(u,p)$ & $\Bbbk[T]/\langle T^2 -1\rangle$ & Id\\ \hline
        $(p,u)$ & $\Bbbk[T]/\langle T^2 -1\rangle$ & Id \\ \hline
        $(p,p)$ & $\frac{\Bbbk \langle T, S \rangle}{\langle T^2 -1, S^2-1 \rangle}$ & {$\substack{
            \iota(T) = S \\
            \iota(S) = T}$} \\ \hline
        band & $\Bbbk[T, T^{-1}]$ & $\iota(T) = T^{-1}$\\ \hline
    \end{tabular}
    \caption{\centering Algebras $A_x$ associated to the type of the admissible string or band $x \in \text{Adm}(Q^{sp})$.}
    \label{table:alg}
\end{table}}



For each $A_x$, denote by $\text{ind}(A_x)$ the set of isoclasses of indecomposable modules over $A_x$. Let $\Bbbk$ be a field with $\operatorname{char}(\Bbbk) \neq 2$. For a given skew-gentle algebra $\Bbbk Q^{sp}/I^{sp}$ define
$$\text{Adm}^{(\Bbbk)}(Q^{sp}) := \{(x, X) \mid x \in \text{Adm}(Q^{sp}) \text{ and } X \in \text{ind}(A_x)\}. $$
\begin{theorem}\cite{CB1989,BT-CB2024} \label{theo:class-indecomposable}
 Let $A = \Bbbk Q^{sp}/I^{sp}$ be a skew-gentle algebra over a field $\Bbbk$ of with $\operatorname{char}(\Bbbk) \neq 2$. Then there exists a surjective map
 $$\text{Adm}^{(\Bbbk)}(Q^{sp}) \to \operatorname{ind}(A)$$
 which associates to any $(x, X) \in \text{Adm}^{(\Bbbk)}(Q^{sp})$ a module $M_x(X)$, such that $M_x(X) \cong M_{x'}(X')$ if and only if $(x', X') = (x^{-1}, X^\iota)$, or, if $x$ is a band, if $x'$ is a rotation of $x$ and $X = X'$. 
\end{theorem}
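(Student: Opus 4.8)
The plan is to realize the skew-gentle algebra $A = \Bbbk Q^{sp}/I^{sp}$ as a \emph{clannish algebra} in the sense of Crawley-Boevey and then run the functorial filtration method. The special loops $\varepsilon_i$ play the role of the ``special'' generators of the clan: each satisfies $\varepsilon_i^2 = e_i$, so its minimal polynomial divides $T^2 - 1 = (T-1)(T+1)$. Because $\operatorname{char}(\Bbbk) \neq 2$ these roots are distinct, so on any finite-dimensional $A$-module $M$ the operator $\varepsilon_i$ acts semisimply with eigenvalues in $\{+1,-1\}$ and $Me_i = M^+_i \oplus M^-_i$ splits into eigenspaces. This is exactly the structural reason behind the duplication $i \mapsto \{i_+, i_-\}$ in the associated quiver $\tilde{Q}$, and it is the only place where the hypothesis on the characteristic is used: in characteristic $2$ the loop $\varepsilon_i$ becomes unipotent rather than semisimple and the splitting fails.

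First I would set up the combinatorial bookkeeping. Following \cite{Geiss2023}, I attach to every $x \in \text{Adm}(Q^{sp})$ a walk in $\tilde{Q}$ and record at each of its two ends whether the walk terminates at an ordinary vertex (type $u$) or enters a special vertex along a non-loop arrow (type $p$), recovering the four string types and the band case of Table~\ref{table:alg}. The datum of an $A$-module supported on such a walk then consists of a vector space attached along the walk together with the prescribed maps for the traversed arrows of $\tilde{Q}$, plus, at each polarized end or around a closed walk, the extra freedom coming from the action of the relevant special loops. I would identify this freedom with a module over the local algebra $A_x$: a $u$-end contributes nothing (factor $\Bbbk$), one $p$-end contributes a single involution $T$ (factor $\Bbbk[T]/\langle T^2-1\rangle$), two $p$-ends contribute two involutions $T,S$ that need not commute (factor $\frac{\Bbbk\langle T,S\rangle}{\langle T^2-1, S^2-1\rangle}$, isomorphic to the group algebra of the infinite dihedral group), and a band contributes the monodromy of the cyclic walk (factor $\Bbbk[T,T^{-1}]$). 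The module $M_x(X)$ is then assembled from the walk $x$ together with the $A_x$-module $X$, the latter encoding precisely the eigenvalue choices and Jordan data at the special ends.

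The core of the argument is the functorial filtration itself: for each arrow of $\tilde{Q}$ one builds a pair of subfunctors of the forgetful functor and shows that the resulting filtration of an arbitrary finite-dimensional module refines into a direct sum whose summands are exactly the $M_x(X)$. This simultaneously yields \emph{surjectivity} of the map $\text{Adm}^{(\Bbbk)}(Q^{sp}) \to \operatorname{ind}(A)$ and shows that $M_x(X)$ is indecomposable precisely when $X$ is. The hardest and most technical step is pinning down the isomorphism relation and verifying that nothing escapes the list: one must check that the only coincidences arise from reversing a walk, which interchanges the two endpoints and hence acts on $A_x$ by the automorphism $\iota$ (swapping $T \leftrightarrow S$ in the $(p,p)$ case, inverting $T$ in the band case, and acting trivially otherwise), giving $M_x(X) \cong M_{x^{-1}}(X^\iota)$, together with cyclic rotation for bands. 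Showing that these are the \emph{only} isomorphisms — so that inequivalent pairs give non-isomorphic modules — is where the bulk of the functorial-filtration computation and the careful handling of the special vertices is concentrated, and for this I would rely directly on \cite{CB1989} and its reformulation in \cite{BT-CB2024}.
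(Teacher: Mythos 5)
Your proposal is correct and takes essentially the same route as the paper: the paper does not prove this theorem at all but imports it directly from \cite{CB1989,BT-CB2024}, which is precisely where your sketch defers the core functorial-filtration argument for surjectivity and for the uniqueness of the isomorphism relation. Your outline — realizing the skew-gentle algebra as a clannish algebra, using $\operatorname{char}(\Bbbk) \neq 2$ to split $Me_i$ into $\varepsilon_i$-eigenspaces (explaining the duplicated vertices of $\tilde{Q}$), and attaching the local algebras $A_x$ with their automorphisms $\iota$ — is a faithful summary of how those references establish the result.
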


\section{Minimal bands for skew-gentle algebras and proof of the main theorem}\label{sec:min-bands}
The goal is to prove the following theorem, which extends \cite[Theorem 1.1]{plamondon2019gentle} to skew-gentle algebras. First recall that a \emph{brick} is a module whose endomorphism ring is a division ring, that is, all of its non-zero endomorphisms are invertible. We will show the following. 
\begin{theorem}\label{thm:main}
    Let $A \cong \Bbbk Q^{sp}/I^{sp}$ be a skew-gentle algebra over a field $\Bbbk$ of with $\operatorname{char}(\Bbbk) \neq 2$. Then, $A$ is brick-finite if and only if it is representation-finite.
\end{theorem}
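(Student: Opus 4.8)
The plan is to prove the nontrivial direction: if $A$ is brick-finite, then $A$ is representation-finite. Equivalently, I will prove the contrapositive, showing that if $A$ is representation-infinite then it admits infinitely many bricks. The key structural fact to exploit is the classification of indecomposables in Theorem~\ref{theo:class-indecomposable}. A skew-gentle algebra is representation-infinite precisely when it admits a band, since the string modules $M_\omega(X)$ for admissible strings $\omega$ are finite in number whenever $Q^{sp}$ has no band, while a single admissible band $b$ produces a one-parameter family of indecomposables $M_b(X)$ for $X \in \operatorname{ind}(\Bbbk[T,T^{-1}])$. So the strategy reduces to: assume $A$ admits an admissible band $b$, and construct infinitely many pairwise non-isomorphic bricks.

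\smallskip

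First I would recall (or verify) the gentle-algebra analogue due to Plamondon, which gives exactly the combinatorial heart of the argument: in a gentle algebra, from a band one extracts a \emph{minimal} band-like configuration whose associated modules, after a careful choice of the one-dimensional parameter $X = \Bbbk[T,T^{-1}]/(T-\lambda)$ for varying $\lambda \in \Bbbk$, yield infinitely many bricks. The decisive point is that for a band module whose underlying band is suitably reduced, every nonzero endomorphism is a scalar, because the graph/string combinatorics forbid nontrivial graph maps between $M_b(\lambda)$ and itself other than the identity-type ones. For the skew-gentle setting I would transport this to the quiver $\tilde{Q}$ with admissible ideal $\tilde{I}$: an admissible band $b$ of $Q^{sp}$ lifts, via the duplication procedure of Figure~\ref{fig:polarized-quiver}, to a genuine band in the ordinary bound quiver algebra $\Bbbk\tilde{Q}/\tilde{I}$, and the band modules $M_b(X)$ correspond to string/band modules over $\Bbbk\tilde{Q}/\tilde{I}$. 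This lets me import the gentle-algebra endomorphism analysis almost verbatim, provided I control the effect of the special vertices.

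\smallskip

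The core of the proof is therefore the construction of what I would call a \emph{minimal brick-producing band}, occupying Section~\ref{sec:min-bands}: I would show that whenever $A$ has a band, it has a band $b$ such that the family $\{M_b(\lambda)\}_{\lambda}$ consists of bricks for all but finitely many $\lambda \in \Bbbk$ (so infinitely many when $\Bbbk$ is infinite; if $\Bbbk$ is finite one instead varies the dimension/degree of $X$ over the residue fields of $\Bbbk[T,T^{-1}]$, still obtaining infinitely many non-isomorphic indecomposables and checking brick-ness degenerates appropriately). Concretely I would compute $\operatorname{End}_A(M_b(\lambda))$ via the standard string-combinatorics description of homomorphisms between band modules: a basis of homomorphisms is indexed by pairs of substrings that can be "glued", and I would argue that for a minimal band these reduce to scalars, so the endomorphism ring is a field and $M_b(\lambda)$ is a brick.

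\smallskip

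I expect the main obstacle to be precisely the new phenomena at the special vertices. Passing from $Q^{sp}$ to $\tilde{Q}$ duplicates special vertices and replaces each special loop with up to two arrows subject to the relation $\varepsilon_i^2 - e_i$, and the resulting band combinatorics in $\tilde{Q}$ can admit extra graph maps coming from the $\pm$ duplication that have no analogue in the gentle case. I would need to check carefully that these do not produce nonscalar endomorphisms that destroy brick-ness, or else refine the notion of minimality to exclude the offending configurations; the type $(p,p)$ entry of Table~\ref{table:alg}, where $A_x$ is noncommutative and $\iota$ swaps $T$ and $S$, is where I anticipate the subtlest case analysis. A secondary technical point is the interplay between the isomorphism relation of Theorem~\ref{theo:class-indecomposable} (modules identified under $x \mapsto x^{-1}$ and $X \mapsto X^\iota$, and bands up to rotation) and counting bricks up to isomorphism: I must ensure that the infinitely many parameter values genuinely yield infinitely many \emph{isoclasses} of bricks and are not collapsed by these identifications, which for a band amounts to checking the band $b$ is not a rotation of its own inverse in a way that would force $\lambda \mapsto \lambda^{-1}$ to identify the parameters pairwise (and even then only finitely many collapses occur).
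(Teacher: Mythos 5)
Your plan has two genuine gaps, and the first one is fatal as stated. You reduce the problem to: ``assume $A$ admits an admissible band $b$,'' on the grounds that the string modules $M_\omega(X)$ are finite in number whenever $Q^{sp}$ has no band. That claim is false for skew-gentle algebras: by Theorem \ref{theo:class-indecomposable} and Table \ref{table:alg}, a \emph{single} admissible string $x$ of type $(p,p)$ already carries an infinite family $M_x(X)$, $X \in \operatorname{ind}(A_x)$, because $A_x \cong \Bbbk\langle T,S\rangle/\langle T^2-1,S^2-1\rangle$ is the group algebra of the infinite dihedral group, which has infinitely many finite-dimensional indecomposables. Concretely, take $Q^{sp}$ to be two special vertices joined by one arrow $\beta_1$ (Figure \ref{fig:C3S} with $s=1$): every cyclic word there is a rotation or power of the symmetric word $\varepsilon_2\beta_1\varepsilon_1\beta_1^{-1}$, so this algebra has \emph{no} admissible band in the sense of Table \ref{table:alg}, yet it is representation-infinite via the $(p,p)$-string $\beta_1$. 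Your construction, being conditioned on the existence of an admissible band, never touches such algebras --- and these are precisely the genuinely new cases of the skew-gentle setting. The paper's first lemma exists exactly to repair this: from a $(p,p)$-string $x$ it builds the band $x^{-1}\varepsilon_{t(x)}x\,\varepsilon_{s(x)}$ in the underlying \emph{gentle} pair $(Q^{sp},I')$ (a band that uses special loops and is not an admissible band of $A$), and the case analysis after Lemma \ref{lemma:min-bands}, carried out on quotients $A/J$, must then distinguish whether the turning vertices of the minimal band are special (cases (ii) and (iii)).

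The second gap is that the core of your argument --- every representation-infinite $A$ has a band $b$ with $M_b(\lambda)$ a brick for all but finitely many $\lambda$ --- is not something you can ``import almost verbatim'' from the gentle case, because it is not what Plamondon proves. His proof, like this paper's, never shows that quasi-simple band modules of the minimal band are bricks; instead it constructs bricks indexed by $n \in \mathbb{N}$ of unbounded dimension with $\operatorname{End}(M_n)\cong\Bbbk$ verified by explicit matrix computations. (Note that these $M_n$ cannot be band modules of the minimal band: in case (ii) they have dimension $2n+1$ at the $\omega$-vertices, whereas a band module would have even dimension there, since the band $b'\omega b''\omega^{-1}$ passes through $\omega$ twice.) This design is forced by two things your proposal only waves at. First, Theorem \ref{thm:main} assumes only $\operatorname{char}(\Bbbk)\neq 2$, and a one-parameter family $\{M_b(\lambda)\}_{\lambda}$ is finite over a finite field; your fallback via residue fields of $\Bbbk[T,T^{-1}]$ again presupposes the very brick-band property you have not established. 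Second, the double pass through $\omega$ with opposite orientations in a type-(2) minimal band is exactly the configuration that can produce non-scalar graph endomorphisms of band modules, so brick-ness of such band modules is a substantive claim, not a formality; it is telling that the parallel work \cite{ChangJinSchrollWang2025}, which does argue through bands, needs the base field to be algebraically closed, while the whole point of the paper's $\mathbb{N}$-indexed constructions in Section \ref{sec:min-bands} is to avoid any such hypothesis. In short, you have correctly identified where the difficulties lie, but the two steps you defer --- the reduction covering $(p,p)$-strings and the endomorphism computations for the minimal configurations --- are the entire mathematical content of the proof.
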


As in the gentle case, it is straightforward to see that being representation-finite is a sufficient condition for being brick-finite, since bricks are indecomposable modules. We will show that this is a sufficient condition.

\begin{lemma}
For any skew-gentle algebra $A = \Bbbk Q^{sp}/I^{sp}$ of infinite representation type, there exists a band in the gentle quiver $(Q^{sp},I')$, where $I' =\langle I \cup \{ \varepsilon_i^2 \mid i \in S_p\} \rangle$ as in Definition \ref{def-skewgentle} $(4)$.
\end{lemma}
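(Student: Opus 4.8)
The goal is to show that any skew-gentle algebra of infinite representation type contains a band in its "underlying gentle quiver" $(Q^{sp}, I')$. This is the crucial structural step: once we know a band exists, we can hope to produce a brick from it (or an infinite family of bricks), thereby contradicting brick-finiteness and establishing the contrapositive of the main theorem.

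My plan is to pass to the gentle algebra $B = \Bbbk Q^{sp}/I'$ and compare its representation type with that of $A$. First I would recall the classical dichotomy for gentle (string) algebras: a gentle algebra is of infinite representation type if and only if it admits at least one band. This is standard, following from the string/band classification of indecomposables (Butler--Ringel). So it suffices to prove that $B$ is of infinite representation type whenever $A$ is.

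To relate the two, I would use the module-theoretic relationship between $A$ and $B$. The algebras share the same quiver $Q^{sp}$, and $I' = \langle I \cup \{\varepsilon_i^2\}\rangle$ differs from $I^{sp} = \langle I \cup \{\varepsilon_i^2 - e_i\}\rangle$ only in the treatment of the special loops: in $B$ each $\varepsilon_i$ is nilpotent ($\varepsilon_i^2 = 0$), whereas in $A$ it satisfies $\varepsilon_i^2 = e_i$. The key point I would establish is that the indecomposable string and band combinatorics for $A$ and $B$ are governed by the \emph{same} set of admissible strings and bands of $Q^{sp}$: the distinction between the two algebras lies only in the module structure placed on the special vertices (the $A_x$-action encoding how $\varepsilon_i$ acts), not in which strings/bands occur. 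Concretely, using \cref{theo:class-indecomposable} and the analogous Butler--Ringel classification for $B$, the admissible bands of $Q^{sp}$ that produce band modules over $A$ correspond exactly to bands of the gentle quiver $(Q^{sp}, I')$. Thus if $A$ has infinitely many indecomposables, an infinite family must come from a band (since there are only finitely many admissible strings, and each string yields only boundedly many indecomposable $A_x$-modules when $A_x$ is finite-dimensional as in Table~\ref{table:alg} for non-band types), and that band is a band of $(Q^{sp}, I')$.

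The step I expect to be the main obstacle is making the correspondence between bands precise across the two algebras, since the ideals $I^{sp}$ and $I'$ genuinely differ on the special loops, and one must verify that a cyclic admissible walk avoiding the relations of $I^{sp}$ is also a legal band for the gentle relation set $I'$ (and conversely that no band is lost or gained in the passage). The care here is that a band must be primitive, not a power of a shorter walk, and must pass through vertices so that the $\varepsilon_i^2 = 0$ relations of $I'$ are respected; I would argue that the avoidance conditions defining admissibility in $Q^{sp}$ already encode exactly the relations shared between $I^{sp}$ and $I'$, so that the combinatorial band condition is identical in both settings. Once this identification is in place, infinitude of representation type for $A$ forces a band in $Q^{sp}$, which is then by definition a band of the gentle quiver $(Q^{sp}, I')$, completing the argument.
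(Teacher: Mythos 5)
Your overall strategy (show that the gentle algebra $B = \Bbbk Q^{sp}/I'$ is representation-infinite whenever $A$ is, then invoke the Butler--Ringel dichotomy for string algebras) is viable and is essentially the paper's route, but your justification of the key implication has a genuine gap, and it sits exactly at the one point where skew-gentle algebras differ from gentle ones. You claim that if $A$ has infinitely many indecomposables, then an infinite family must come from an admissible band, ``since there are only finitely many admissible strings, and each string yields only boundedly many indecomposable $A_x$-modules when $A_x$ is finite-dimensional \dots for non-band types.'' Both halves of this are wrong. First, Table~\ref{table:alg} shows that for an admissible string $x$ of type $(p,p)$ the algebra $A_x = \Bbbk\langle T,S\rangle/\langle T^2-1,\, S^2-1\rangle$ is \emph{not} finite-dimensional: it is the group algebra of the infinite dihedral group and has infinitely many finite-dimensional indecomposables. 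Hence a \emph{single} admissible string of type $(p,p)$ already produces an infinite family of indecomposable $A$-modules, with no admissible band anywhere in sight, so your inference ``$A$ representation-infinite $\Rightarrow$ some admissible band exists'' breaks down. This case cannot be absorbed into your correspondence between bands of $A$ and bands of $B$; it requires a separate construction, which is precisely what the paper supplies: if $x$ is a $(p,p)$-string, both of its endpoints are special vertices not reached through special loops, so $x^{-1}\,\varepsilon_{t(x)}\, x\, \varepsilon_{s(x)}$ is a band in $(Q^{sp},I')$, where the special loops are now ordinary arrows of the gentle quiver squaring to zero. Your proposal contains no such step, and without it the lemma does not follow.

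Second, the assertion that there are only finitely many admissible strings is also false in general (any algebra admitting a band has arbitrarily long strings, and nothing rules out infinitely many admissible strings a priori). This case is real but repairable along lines you already have available: infinitely many admissible strings of $Q^{sp}$ are in particular infinitely many strings of the gentle quiver $(Q^{sp},I')$, so $B$ is representation-infinite and therefore admits a band by the Butler--Ringel result you cite. That fixes the third case of the trichotomy coming from Theorem~\ref{theo:class-indecomposable} (admissible band, $(p,p)$-string, or infinitely many admissible strings), but the $(p,p)$-string case remains a missing idea in your argument rather than a missing detail.
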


\begin{proof}
As recalled in the previous section, the indecomposable modules of $A$ can be classified using admissible strings and bands. Note that Theorem \ref{theo:class-indecomposable} says that a given $x \in \text{Adm}(Q^{sp})$ gives rise to an infinite family of indecomposable modules only if $x$ is an admissible band, or if $x$ is an admissible string of type $(p,p)$. In the later case, by definition, $s(x)$ and $t(x)$ are special vertices, and thus $x^{-1} \varepsilon_{t(x)} x \ \varepsilon_{s(x)}$ is a band in $(Q^{sp},I')$. 
\smallskip

Thus, if $A$ is of infinite representation type, either $(Q^{sp},I')$ has a band, or $A$ must have infinitely many admissible strings, which in particular implies that $(Q^{sp},I')$ admits infinitely many strings. In particular, in the later case, $\Bbbk Q^{sp}/I'$ is a gentle algebra of infinite type, hence, it also admits a band as shown in \cite{butler1987auslander}. 
\end{proof}

As in \cite{plamondon2019gentle}, we will use the following lemmas to reduce the proof of \cref{thm:main} to a list of minimal cases.
    \begin{lemma}\cite[Theorem 5.12(d)]{DemonetIyamaReadingReitenThomas2019}, \cite[Theorem 1.4 ]{DemonetIyamaJasso2019}
        If $A$ is a brick-finite algebra and $J$ is an ideal of $A$, then $A/J$ is brick-finite.
    \end{lemma}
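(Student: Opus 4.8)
The statement I want to prove is that if $A$ is brick-finite and $J$ is an ideal of $A$, then $A/J$ is brick-finite. The plan is to exploit the restriction-of-scalars functor along the surjection $\pi\colon A \to A/J$. Any $A/J$-module $N$ pulls back to an $A$-module $\pi^* N$ on which $J$ acts as zero, and this restriction functor is fully faithful: an $A/J$-linear map $N \to N'$ is precisely an $A$-linear map $\pi^* N \to \pi^* N'$, since every $A$-action factors through $A/J$. Consequently $\operatorname{End}_{A/J}(N) \cong \operatorname{End}_A(\pi^* N)$ as rings.

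From this identification the result is immediate. First I would note that $\pi^*$ preserves indecomposability, since the endomorphism ring is unchanged and indecomposability is detected by the (non-)existence of nontrivial idempotents in the endomorphism ring. More to the point, $N$ is a brick over $A/J$ if and only if $\operatorname{End}_{A/J}(N)$ is a division ring, which by the isomorphism above holds if and only if $\operatorname{End}_A(\pi^* N)$ is a division ring, i.e. if and only if $\pi^* N$ is a brick over $A$. Thus $\pi^*$ sends bricks to bricks and reflects isomorphism classes: if $\pi^* N \cong \pi^* N'$ as $A$-modules, the isomorphism is automatically $A/J$-linear (again because the $A$-action factors through $A/J$), so $N \cong N'$ as $A/J$-modules.

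Putting these together, $\pi^*$ induces an injection from the set of isomorphism classes of bricks over $A/J$ into the set of isomorphism classes of bricks over $A$. Since $A$ is brick-finite, the latter set is finite, and therefore so is the former; hence $A/J$ is brick-finite. The only mild subtlety worth stating carefully is the full faithfulness of $\pi^*$ and the fact that it reflects isomorphisms, both of which are formal consequences of $\pi$ being a surjective ring homomorphism, so I do not anticipate any real obstacle here. Since the lemma is quoted from \cite{DemonetIyamaReadingReitenThomas2019,DemonetIyamaJasso2019} and these arguments merely recover a standard fact, I would present this as a short remark rather than a detailed proof and cite the references for the original statement.
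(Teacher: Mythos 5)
Your proposal is correct, and it is worth noting that the paper itself gives no argument at all for this lemma: it is quoted with citations to \cite{DemonetIyamaReadingReitenThomas2019} and \cite{DemonetIyamaJasso2019}, where the statement arises inside the machinery of $\tau$-tilting theory (there the nontrivial content is the equivalence between brick-finiteness, $\tau$-tilting finiteness, and finiteness of functorially finite torsion classes, and quotient-closure is established in that framework). Your argument bypasses all of that: restriction of scalars along the surjection $\pi\colon A \to A/J$ is fully faithful, identifies $\operatorname{End}_{A/J}(N)$ with $\operatorname{End}_A(\pi^* N)$ as rings, hence sends bricks to bricks and reflects isomorphisms, giving an injection from isomorphism classes of bricks over $A/J$ into those over $A$. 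Every step is a formal consequence of $\pi$ being surjective, so the proof is complete and self-contained; phrased differently, the bricks of $A/J$ are exactly the bricks of $A$ annihilated by $J$, so brick-finiteness passes to quotients for free. What the elementary route buys is independence from $\tau$-tilting theory (and from any finiteness or representation-theoretic hypotheses beyond brick-finiteness itself); what the citation buys the authors is brevity and a pointer to the broader context in which this lemma is usually invoked. Your closing suggestion --- to present this as a short remark while still citing the references --- is exactly the right editorial call.
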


\begin{lemma}\cite[Lemma 4.1]{plamondon2019gentle} \label{lemma:min-bands}
    Let $A$ be a skew-gentle representation-infinite algebra. The quiver $(Q^{sp},I')$ associated with $A$ admits a minimal band $b$ such that:
\begin{enumerate}
    \item Either the band $b$ does not pass twice through the same vertex, except for the starting and ending vertices of $b$.
    \item Or the band $b$ is of the form $b= b'\omega b''\omega^{-1}$, where:
    \begin{enumerate}
        \item$b'$ and $b''$ are strings such that $s(b') = t(b')$ and  $s(b'') = t(b'')$.
        \item $b'^2$ and $b''^2$ are not strings.
        \item  $\omega$ may be trivial.
        \item None of the three strings $b', b''$ and $\omega$ pass twice through the same vertex, except for the starting and ending vertices of $b'$ and $b''$. In particular, the later are the only possible special vertices in $b$.
        \item The only possible common vertex between the three strings $b',\  b'',\  \omega$ are their starting and ending vertices.
    \end{enumerate}
\end{enumerate}
\end{lemma}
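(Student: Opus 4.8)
The plan is to deduce the structural dichotomy from Plamondon's original lemma applied to the gentle algebra $\Bbbk Q^{sp}/I'$, and then to supply the single skew-gentle ingredient, namely the control of special vertices. By the preceding lemma, $(Q^{sp},I')$ admits a band, so the gentle algebra $\Bbbk Q^{sp}/I'$ is representation-infinite. Hence \cite[Lemma 4.1]{plamondon2019gentle} applies to it and yields a band $b$ of minimal length satisfying either alternative (1), or alternative (2) together with conditions (a), (b), (c), (e) and the simplicity statement forming the first sentence of (d). It therefore remains to prove the final clause of (d): that in case (2) the only vertices of $b$ which can be special are $s(b')=t(b')$ and $s(b'')=t(b'')$.

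For this I would first analyze the local configuration of $\Bbbk Q^{sp}/I'$ at a special vertex $i$. Let $\alpha$ and $\beta$ denote the non-loop arrows (at most one of each) with $t(\alpha)=i=s(\beta)$. Starting from $\varepsilon_i^2\in I'$ and using the gentle axioms for $\Bbbk Q^{sp}/I'$, one gets $\beta\varepsilon_i\notin I'$ and $\varepsilon_i\alpha\notin I'$; since $\varepsilon_i\alpha$ then exhausts the unique continuation of $\alpha$ avoiding the ideal, this forces $\beta\alpha\in I'$. In other words, one cannot traverse $i$ ``straight through'' along the non-loop arrows. A short inspection of the incidences at $i$ — listing the letters ending at $i$ (namely $\alpha$, $\beta^{-1}$, $\varepsilon_i^{\pm 1}$) and checking that each non-loop arrival can only be continued by the loop, by a backtracking step, or by the forbidden transition $\beta\alpha$ (or its inverse) — shows that every reduced walk passing through $i$ must use the loop $\varepsilon_i$; the degenerate cases where $\alpha$ or $\beta$ is absent are handled identically.

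Since the loop $\varepsilon_i$ places $i$ in two consecutive positions of the walk, any special vertex traversed by $b$ is a repeated vertex of $b$. By the simplicity assertion already provided by Plamondon's lemma, the repeated vertices of $b$ are exactly $s(b')=t(b')$ and $s(b'')=t(b'')$, so every special vertex lies in $\{s(b'),s(b'')\}$, as claimed; the same count shows, more precisely, that a loop occurring inside $b'$ or $b''$ must constitute the entire subcycle, as otherwise it would produce a third occurrence of $s(b')$ or $s(b'')$.

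The step I expect to be the main obstacle is the local rigidity at special vertices: one has to extract $\beta\alpha\in I'$ cleanly from the gentle axioms and then verify that \emph{no} orientation of the incident letters — including the mixed forward/inverse transitions, which are not constrained by the ideal — lets a walk bypass the loop. Once this is secured, the reduction of the special vertices to the two distinguished endpoints is an immediate consequence of the structure supplied by \cite[Lemma 4.1]{plamondon2019gentle}.
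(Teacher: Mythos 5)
Your overall route coincides with the paper's: the paper's entire proof consists of invoking \cite[Lemma 4.1]{plamondon2019gentle} for the gentle algebra $\Bbbk Q^{sp}/I'$, exactly as you do, and it leaves the special-vertex clause of (2)(d) implicit. You correctly identify that this clause is the one genuinely skew-gentle ingredient, and your local analysis at a special vertex $i$ is right: from $\varepsilon_i^2 \in I'$ and the gentle axioms one gets $\beta\varepsilon_i \notin I'$ and $\varepsilon_i\alpha \notin I'$, hence $\beta\alpha \in I'$, and the four loop-free transitions through $i$ (namely $\beta\alpha$, $\alpha^{-1}\alpha$, $\beta\beta^{-1}$, $\alpha^{-1}\beta^{-1}$) are all excluded, so every string passing through $i$ must use $\varepsilon_i^{\pm 1}$ and therefore visits $i$ at two consecutive positions.

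However, your concluding step is false as stated: in case (2) the repeated vertices of $b$ are \emph{not} exactly $s(b')=t(b')$ and $s(b'')=t(b'')$. Since $b = b'\omega b''\omega^{-1}$ traverses $\omega$ twice (once in each direction), every vertex of $\omega$ is a repeated vertex of the band whenever $\omega$ is nontrivial. Consequently your syllogism (special $\Rightarrow$ repeated in $b$, and repeated in $b$ $\Rightarrow$ lies in $\{s(b'), s(b'')\}$) does not rule out a special vertex in the interior of $\omega$. The repair is immediate and uses your own local lemma, applied within each constituent string rather than to $b$ globally: if $i$ is an interior vertex of one of $b'$, $b''$, $\omega$, then the two letters adjacent to the passage through $i$ both belong to that string, so the forced loop letter $\varepsilon_i^{\pm 1}$ is itself a letter of that string, which therefore passes through $i$ twice, contradicting the first sentence of (2)(d). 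Since by (2)(e) the endpoints of $\omega$ are exactly $s(b')=t(b')$ and $s(b'')=t(b'')$, the only possible special vertices of $b$ are these two, as claimed. With this one step restated, your argument is complete and in fact supplies detail that the paper omits.
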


\begin{proof}
    This lemma is \cite[Lemma 4.1]{plamondon2019gentle} applied to $Q^{sp}$ in the case of skew-gentle algebras. The same proof applies because, by definition, $\Bbbk Q^{sp}/I'$ is gentle.
\end{proof}

We will consider the algebras $A/J$ for $J$ defined depending on the type  (1 or 2) of minimal band $b$ in the quiver $Q^{sp}$. In both cases, $J$ is the ideal of arrows and vertices which $b$ does not go through.

\begin{case}\textbf{$b$ is of type (1)}
    In this case, the algebra $A/J$ is isomorphic to the path algebra of $Q$ where the undirected graph of $Q$ is given by: 
 \begin{figure}[H]
    \centering
    \begin{tikzpicture}
    \node (M1) at (0,1) {$\bullet$};
    \node (M2) at (1,2) {$\bullet$};
    \node (M3) at (2,2) {$\bullet$};
    \node (M4) at (3,2) {};
    \node (M5) at (4,2) {};
    \node (M6) at (5,2) {$\bullet$};
    \node (M7) at (6,1) {$\bullet$};
    \node (M8) at (5,0) {$\bullet$};
    \node (M9) at (4,0) {};
    \node (M10) at (3,0) {};
    \node (M11) at (2,0) {$\bullet$};
    \node (M12) at (1,0) {$\bullet$};
    \draw[-] (M1) --  (M2);
    \draw[-] (M2) --  (M3);
    \draw[-] (M3) --  (M4);
    \draw[dashed, -] (M4) --  (M5);
    \draw[-] (M5) --  (M6);
    \draw[-] (M6) --  (M7);
    \draw[-] (M7) --  (M8);
    \draw[-] (M8) --  (M9);
    \draw[dashed,-] (M9) --  (M10);
    \draw[-] (M10) --  (M11);
    \draw[-] (M11) --  (M12);
    \draw[-] (M12) --  (M1);
\end{tikzpicture}
    \caption{Quiver of type $\widetilde{A_m}$ for $m\geq 1$.}
    \label{fig:An}
\end{figure}
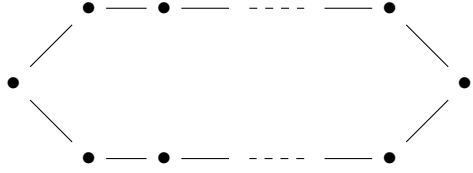
In this case, it is well known that the path algebra of this quiver is brick-infinite. A proof can be found in \cite{assem2006} and this case is also \cite[Example 3.1]{plamondon2019gentle}.
\end{case}
 
\begin{case}\textbf{$b$ is of type (2)}
In this case, the algebra $A/J$ is isomorphic to the skew-gentle algebra given by one of the following three quivers, depending on whether the vertices $s(b') = t(b')$ and $s(b'') = t(b'')$, which we mark with a star, are special:
\begin{enumerate}[label=(\roman*), leftmargin=*, labelindent=-1em, itemsep=1em]
    \item Both $s(b') = t(b')$ and $s(b'') = t(b'')$ are non-special: \vspace{0.25em}\\ \begin{tikzpicture}[scale = 0.75,transform shape, baseline=(current bounding box.center)]    
    \node (A1) at (0,0) {$\bullet$};
    \node (A2) at (1,0) {};
    \node (A3) at (2,0) {};
    \node (A4) at (3,0) {$\bullet$};
    \node (A5) at (3,1) {$\star$};
    \node (A6) at (3,2) {$\bullet$};
    \node (A7) at (2,2) {};
    \node (A8) at (1,2) {};
    \node (A9) at (0,2) {$\bullet$};
    \node (A10) at (0,1) {$\bullet$};
    \node (M2) at (5,1) {$\bullet$};
    \node (M3) at (7,1) {};
    \node (M4) at (9,1) {};
    \node (M5b) at (11,1) {$\bullet$};
    \node (M5) at (13,1) {$\star$};
    \node (B1) at (13,2) {$\bullet$};
    \node (B2) at (14,2) {};
    \node (B3) at (15,2) {};
    \node (B4) at (16,2) {$\bullet$};
    \node (B5) at (16,1) {$\bullet$};
    \node (B6) at (16,0) {$\bullet$};
    \node (B7) at (15,0) {};
    \node (B8) at (14,0) {};
    \node (B9) at (13,0) {$\bullet$};
    \draw[-] (A5) -- node[above] {$\beta_1$} (M2);
    \draw[-] (M2) -- node[above] {$\beta_2$} (M3);
    \draw[dashed,-] (M3) -- (M4);
    \draw[-] (M4) -- node[above] {$\beta_{s-1}$} (M5b);
    \draw[-] (M5) -- node[above] {$\beta_{s}$} (M5b);
    \draw[-] (A1) -- (A2);
    \draw[dashed,-] (A2) -- (A3);
    \draw[-] (A3) -- node[above] {$\alpha_2$} (A4);
    \draw[<-] (A4) -- node[right] {$\alpha_1$} (A5);
    \draw[<-] (A5) -- node[right] {$\alpha_r$} (A6);
    \draw[-] (A6) -- node[above] {$\alpha_{r-1}$} (A7);
    \draw[dashed,-] (A7) -- (A8);
    \draw[-] (A8) -- (A9);
    \draw[-] (A9) -- (A10);
    \draw[-] (A10) -- (A1);
    \draw[-] (B1) --  node[above] {$\gamma_2$}(B2);
    \draw[dashed,-] (B2) -- (B3);
    \draw[-] (B3) --  (B4);
    \draw[-] (B4) --  (B5);
    \draw[-] (B5) -- (B6);
    \draw[-] (B6) -- (B7);
    \draw[dashed,-] (B7) -- (B8);
    \draw[-] (B8) -- node[above] {$\gamma_{t-1}$}(B9);
    \draw[->] (B9) -- node[left] {$\gamma_t$} (M5);
    \draw[->] (M5) -- node[left] {$\gamma_1$} (B1);
    \draw[dash pattern=on 1pt off 3pt, bend left=80] ($(A4)!0.5!(A5)$) to ($(A5)!0.5!(A6)$);
    \draw[dash pattern=on 1pt off 3pt, bend right=80] ($(B9)!0.5!(M5)$) to ($(M5)!0.5!(B1)$);
\end{tikzpicture}
    \item One vertex is between $s(b') = t(b')$ and $s(b'') = t(b'')$ is special, and the other is non-special: \vspace{0.25em}\\ \begin{tikzpicture}[scale = 0.75,transform shape, baseline=(current bounding box.center)]
    \node (A1) at (0,0) {};
    \node (A5) at (3,1) {$\star$};
    \node (M2) at (5,1) {$\bullet$};
    \node (M3) at (7,1) {};
    \node (M4) at (9,1) {};
    \node (M5b) at (11,1) {$\bullet$};
    \node (M5) at (13,1) {$\star$};
    \node (B1) at (13,2) {$\bullet$};
    \node (B2) at (14,2) {};
    \node (B3) at (15,2) {};
    \node (B4) at (16,2) {$\bullet$};
    \node (B5) at (16,1) {$\bullet$};
    \node (B6) at (16,0) {$\bullet$};
    \node (B7) at (15,0) {};
    \node (B8) at (14,0) {};
    \node (B9) at (13,0) {$\bullet$};
    \draw[-] (A5) -- node[above] {$\beta_1$} (M2);
    \draw[-] (M2) -- node[above] {$\beta_2$} (M3);
    \draw[dashed,-] (M3) -- (M4);
    \draw[-] (M4) -- node[above] {$\beta_{s-1}$} (M5b);
    \draw[-] (M5) -- node[above] {$\beta_{s}$} (M5b);
    \draw[->, looseness=8, in=125, out=55] (A5) to node[above] {$\varepsilon_1$}(A5);
    \draw[-] (B1) --  node[above] {$\delta_2$}(B2);
    \draw[dashed,-] (B2) -- (B3);
    \draw[-] (B3) --  (B4);
    \draw[-] (B4) --  (B5);
    \draw[-] (B5) -- (B6);
    \draw[-] (B6) -- (B7);
    \draw[dashed,-] (B7) -- (B8);
    \draw[-] (B8) -- node[above] {$\delta_{t-1}$}(B9);
    \draw[-] (B9) -- node[left] {$\delta_t$} (M5);
    \draw[-] (M5) -- node[left] {$\delta_1$} (B1);
    \draw[dash pattern=on 1pt off 3pt, bend right=80] ($(B9)!0.5!(M5)$) to ($(M5)!0.5!(B1)$);
\end{tikzpicture}
    \item Both $s(b') = t(b')$ and $s(b'') = t(b'')$ are special: \vspace{0.25em}\\ \begin{tikzpicture}[scale = 0.75,transform shape, baseline=(current bounding box.center)]
    \node (A1) at (0,0) {};
    \node (B6) at (16,0) {};
    \node (A5) at (3,1) {$\star$};
    \node (M2) at (5,1) {$\bullet$};
    \node (M3) at (7,1) {};
    \node (M4) at (9,1) {};
    \node (M5b) at (11,1) {$\bullet$};
    \node (M5) at (13,1) {$\star$};
    \draw[-] (A5) -- node[above] {$\beta_1$} (M2);
    \draw[-] (M2) -- node[above] {$\beta_2$} (M3);
    \draw[dashed,-] (M3) -- (M4);
    \draw[-] (M4) -- node[above] {$\beta_{s-1}$} (M5b);
    \draw[-] (M5) -- node[above] {$\beta_{s}$} (M5b);
    \draw[->, looseness=8, in=125, out=55] (A5) to node[above] {$\varepsilon_1$}(A5);
    \draw[->, looseness=8, in=125, out=55] (M5) to node[above] {$\varepsilon_{s+1}$}(M5);
\end{tikzpicture}
\end{enumerate}
\end{case}

  
Case (i) has already been treated in \cite{plamondon2019gentle}. This is a gentle algebra which admits an infinite family of bricks, which are described in \cite[Proposition 3.3]{plamondon2019gentle}. For cases $(ii)$ and $(iii)$, we will use their realization in terms of the quiver $\tilde{Q}$ and the admissible ideal $\tilde{I}$ introduced in Section \ref{Preliminaries}. Recall that by Lemma \ref{lemma:min-bands} (2.b), we know that no vertex in $\omega$ is special, but for the ones adjacent to $b$ and $b'$. For case $(ii)$, when only one of the vertices in question is special, the associated quiver $\tilde{Q}$ is given by:

\begin{figure}[H]
    \centering
    \hspace{-4em}\makebox[\textwidth][c]{
    \begin{tikzpicture}[scale = 0.85,transform shape, baseline=(current bounding box.center)]
    \node (A1) at (0,1) {};
    \node (A5) at (3,0) {$\bullet$};
    \node[above] at (A5) {$1_-$};
    \node (A5') at (3,2) {$\bullet$};
    \node[above] at (A5') {$1_+$};
    \node (M2) at (5,1) {$\bullet$};
    \node[above] at (M2) {$2$};
    \node (M3) at (7,1) {};
    \node (M4) at (9,1) {};
    \node (M4') at (11,1) {$\bullet$};
    \node[above] at (M4') {$s-1$};
    \node (M5) at (13,1) {$\bullet$};
    \node[right] at (M5) {$s$};
    \node (B1) at (13,2) {$\bullet$};
    \node[above] at (B1) {$s+1$};
    \node (B2) at (14,2) {};
    \node (B3) at (15,2) {};
    \node (B4) at (16,2) {$\bullet$};
    \node (B5) at (16,1) {$\bullet$};
    \node (B6) at (16,0) {$\bullet$};
    \node (B7) at (15,0) {};
    \node (B8) at (14,0) {};
    \node (B9) at (13,0) {$\bullet$};
    \node[below] at (B9) {$s+t-1$};
    \draw[-] (A5) -- node[above] {${}_-\beta_1$} (M2);
    \draw[-] (A5') -- node[above] {${}_+\beta_1$} (M2);
    \draw[-] (M2) -- node[above] {$\beta_2$} (M3);
    \draw[dashed,-] (M3) -- (M4);
    \draw[-] (M4) -- node[above, xshift = -0.3em] {$\beta_{s-1}$} (M4');
    \draw[-] (M4') -- node[above, xshift = 0.3em] {$\beta_s$} (M5);
    \draw[-] (B1) --  node[below] {$\gamma_2$}(B2);
    \draw[dashed,-] (B2) -- (B3);
    \draw[-] (B3) --  (B4);
    \draw[-] (B4) --  (B5);
    \draw[-] (B5) -- (B6);
    \draw[-] (B6) -- (B7);
    \draw[dashed,-] (B7) -- (B8);
    \draw[-] (B8) -- node[above] {$\gamma_{t-1}$}(B9);
    \draw[->] (B9) -- node[left] {$\gamma_t$} (M5);
    \draw[->] (M5) -- node[left] {$\gamma_1$} (B1);
    \draw[dash pattern=on 1pt off 3pt]
  ($(B9)!0.5!(M5)$)
  .. controls
    ($(B9)!0.5!(M5) + (0.55,0)$) and
    ($(M5)!0.5!(B1) + (0.55,0)$)
  ..
  ($(M5)!0.5!(B1)$);
    \end{tikzpicture}   }
    \caption{The $\tilde{Q}$ quiver associated to Case b $(ii)$. The direction of the arrows ${}_{-}\beta_1$ and ${}_{+}\beta_1$ follows that of $\beta_1$.}
    \label{fig:C2S}
\end{figure}
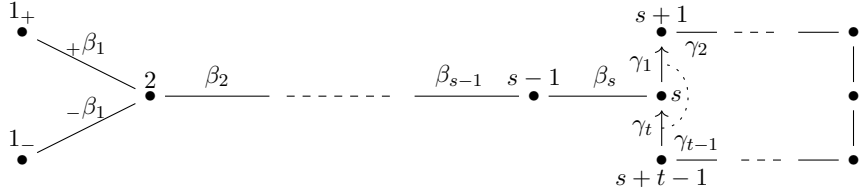
\vspace{-2.5em}

\begin{remark}
   If $t= 1$, we have that $\gamma_1$ is a non-special loop such that ${\gamma_1}^2 = 0$, and we must have that $s \geq 0$.
\end{remark}

\begin{proposition}
    The skew-gentle algebra given by the quiver $\tilde{Q}$ in Figure \ref{fig:C2S} has an infinite family of bricks.
\end{proposition}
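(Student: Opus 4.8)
The plan is to exhibit an explicit one-parameter family of band modules for $(\tilde{Q}, \tilde{I})$ and to check by hand that each member is a brick, following the strategy of \cite[Proposition 3.3]{plamondon2019gentle} but with the extra bookkeeping forced by the special vertex. Since $\Bbbk\tilde{Q}/\tilde{I} \cong A$ and $\tilde{I}$ is admissible, producing an $A$-module is the same as producing a representation of $\tilde{Q}$ annihilated by $\tilde{I}$, so I would work entirely in this concrete picture.

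First I would single out the band $b = \varepsilon_1\,\omega\,b''\,\omega^{-1}$ underlying \cref{fig:C2S}, where $\omega$ is the string formed by the arrows $\beta_1,\ldots,\beta_s$ joining the special vertex $1$ to the vertex $s$ and $b''$ is the cycle formed by $\gamma_1,\ldots,\gamma_t$ at $s$; this corresponds to an admissible band in the sense of \cref{lemma:min-bands}, and \cref{theo:class-indecomposable} attaches to it, for each $\lambda \in \Bbbk^{\times}$, a band module $M_\lambda := M_b(\Bbbk[T,T^{-1}]/(T-\lambda))$. Concretely $M_\lambda$ has a one-dimensional space at each of $1_+$ and $1_-$, a two-dimensional space at every interior vertex of $\omega$ (which the band traverses once as $\omega$ and once as $\omega^{-1}$), and the usual band maps along the arrows, with the scalar $\lambda$ inserted on the arrow that closes the cycle. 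The special loop $\varepsilon_1$ acts on the two-dimensional space $M_1$ as the involution $\bigl(\begin{smallmatrix} 0 & 1 \\ 1 & 0\end{smallmatrix}\bigr)$; since $\operatorname{char}(\Bbbk)\neq 2$ this is diagonalizable with eigenvalues $\pm 1$, which is exactly what realizes the splitting $M_1 = M_{1_+}\oplus M_{1_-}$ and makes $M_\lambda$ a genuine $\Bbbk Q^{sp}/I^{sp}$-module. Checking that $M_\lambda$ satisfies $\tilde{I}$ is then routine: the only constraints are the zero relation $\gamma_1\gamma_t$ closing the cycle and the sum-relations of $\tilde{I}$ passing through vertex $1$.

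The heart of the argument is to show that $M_\lambda$ is a brick. Given $\phi \in \operatorname{End}(M_\lambda)$, I would propagate a single scalar through the band: commuting with the $\gamma_i$ forces $\phi$ to be scalar (up to a strictly upper-triangular nilpotent part) along the cycle $b''$; commuting with $\varepsilon_1$ forces $\phi|_{M_1}$ to have the symmetric form $\bigl(\begin{smallmatrix} a & c \\ c & a\end{smallmatrix}\bigr)$; and commuting with the doubled path — using that the forward traversal $\omega$ and the backward traversal $\omega^{-1}$ carry opposite orientation, together with the minimality of $b$ from \cref{lemma:min-bands} (no repeated interior vertices, $b'^2$ and $b''^2$ not strings) — forces the mixing entry $c$ and all nilpotent parts to vanish and identifies the remaining diagonal scalars. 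Hence $\phi = a\cdot\operatorname{id}$. (I checked this in the smallest case, where $\omega$ is a single arrow and $b''$ is a loop $\gamma_1$ with $\gamma_1^2 = 0$: commuting successively with $\gamma_1$, $\varepsilon_1$ and $\beta_1$ forces $c=0$ and a single scalar, and the general case is the same propagation.)

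Finally, by \cref{theo:class-indecomposable} one has $M_\lambda \cong M_\mu$ only when $\mu \in \{\lambda, \lambda^{-1}\}$, so the assignment $\lambda \mapsto M_\lambda$ is at most two-to-one and yields infinitely many pairwise non-isomorphic bricks whenever $\Bbbk$ is infinite. If $\Bbbk$ is finite I would instead run the same construction with $X = \Bbbk[T,T^{-1}]/(p)$ for $p$ a monic irreducible distinct from $T$: the same propagation argument gives $\operatorname{End}(M_b(X)) \cong \operatorname{End}_{A_b}(X) = \Bbbk[T,T^{-1}]/(p)$, a field and hence a division ring, so each $M_b(X)$ is a brick and there are infinitely many such $p$. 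The main obstacle is the brick verification at and around the special vertex: one must confirm that neither the $\pm$-splitting of $M_1$ nor the sum-relations of $\tilde{I}$ create endomorphisms beyond the scalars, and that the double traversal of $\omega$ contributes none either.
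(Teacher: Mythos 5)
Your approach is correct in substance but takes a genuinely different route from the paper's. The paper exhibits, for each $n \in \N$, a representation $M_n$ whose dimension vector grows with $n$ (dimension $n$ at $1_{\pm}$ and along $b''$, dimension $2n+1$ along $\omega$) and proves $\operatorname{End}(M_n) \cong \Bbbk$ by a long explicit matrix computation; infinitude is then automatic because the dimension vectors differ, and the argument works uniformly over every field of characteristic $\neq 2$. You instead use a one-parameter family of band-type modules of a \emph{fixed} dimension vector attached to the cyclic walk $\varepsilon_1 \omega b'' \omega^{-1}$, and the key point — worth stressing, since it is exactly where the special vertex changes the game — is that $\varepsilon_1$ acts invertibly (as an involution) rather than nilpotently: commuting with the swap at vertex $1$ together with the $\lambda$-twisted map along $\beta_1$ kills both the mixing entry at the special vertex and the nilpotent part coming from the $b''$-cycle, whereas for the purely gentle configuration of two nilpotent cycles the analogous band module is \emph{not} a brick (this is precisely why Plamondon, and this paper, build growing families). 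I verified your propagation in the cases $(s,t)=(1,1)$ and $(1,2)$ and in the block version needed for irreducible polynomials of higher degree, and it does close up in general, for either orientation of $\beta_1$ and arbitrary orientations along $\omega$ and $b''$. Two repairs are needed to make this a complete proof. First, the appeal to Theorem \ref{theo:class-indecomposable} is formally loose: in the skew-gentle classification a cyclic walk through a special loop is not an admissible band; the object parametrizing your family is the admissible string $\omega b'' \omega^{-1}$ of type $(p,p)$ (both ends at vertex $1$), whose algebra $A_x$ is $\Bbbk\langle T,S\rangle/\langle T^2-1, S^2-1\rangle$ rather than $\Bbbk[T,T^{-1}]$. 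This costs you nothing for the construction, since your modules are explicit, but it means you cannot quote that theorem for the non-isomorphism claim; instead, run the same propagation on $\operatorname{Hom}(M_\lambda, M_\mu)$, which gives $\operatorname{Hom}(M_\lambda,M_\mu)=0$ unless $\mu \in \{\lambda, \lambda^{-1}\}$. Second, the brick verification for general $s$, $t$ and general orientations is asserted by analogy with your smallest case and must actually be written out, though it is a finite computation of exactly the kind you describe. The trade-off: your route gives shorter, more conceptual computations and small modules (over infinite fields), while the paper's single family needs no case distinction between infinite and finite $\Bbbk$ and no appeal to the classification machinery.
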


\begin{proof}
Let $\tilde{Q}$ be the quiver in Figure \ref{fig:C2S}. We will only show this proposition for the case where $\beta_1$ is pointing towards the special vertex. For the other case, the infinite family of bricks will be given by the representations whose linear maps associated to ${}_{-}\beta_1$ and ${}_{+}\beta_1$\ are the transpositions of the maps we define here.
\smallskip

Let $n \in \N$ and consider the following representation $M_n$ of the quiver $\tilde{Q}$:

\vspace{-2em}
\begin{figure}[H]
    \centering
        \begin{tikzpicture}[scale = 0.85,transform shape]
    \node (A5) at (3,-1) {$\Bbbk^n$};
    \node (A5') at (3,3) {$\Bbbk^n$};
    \node (M2) at (5,1) {$\Bbbk^{2n+1}$};
    \node (M3) at (7,1) {};
    \node (M4) at (9,1) {};
    \node (M5) at (11,1) {$\Bbbk^{2n+1}$};
    \node (B1) at (11,3) {$\Bbbk^n$};
    \node (B2) at (12.5,3) {};
    \node (B3) at (14,3) {};
    \node (B4) at (15.5,3) {$\Bbbk^n$};
    \node (B5) at (15.5,1) {$\Bbbk^n$};
    \node (B6) at (15.5,-1) {$\Bbbk^n$};
    \node (B7) at (14,-1) {};
    \node (B8) at (12.5,-1) {};
    \node (B9) at (11,-1) {$\Bbbk^n$};
    \draw[<-] (A5) -- node[pos=1.6, below=37pt]{\footnotesize$\begin{bmatrix}
        Id_n & Id_n & v_1\\
    \end{bmatrix}$} (M2);
    \draw[<-] (A5') -- node[pos=1.3, above=33pt]{\footnotesize$\begin{bmatrix}
        0_{n,n+1} & Id_n
    \end{bmatrix}$} (M2);
    \draw[-] (M2) -- node[above] {$Id_{2n+1}$} (M3);
    \draw[dashed,-] (M3) -- (M4);
    \draw[-] (M4) -- node[above] {$Id_{2n+1}$} (M5);
    \draw[-] (B1) --  node[above] {$Id_n$}(B2);
    \draw[dashed,-] (B2) -- (B3);
    \draw[-] (B3) -- node[above] {$Id_n$} (B4);
    \draw[-] (B4) --  node[right] {$Id_n$}(B5);
    \draw[-] (B5) -- node[right] {$Id_n$}(B6);
    \draw[-] (B6) -- node[above] {$Id_n$}(B7);
    \draw[dashed,-] (B7) -- (B8);
    \draw[-] (B8) -- node[above] {$Id_n$}(B9);
    \draw[->] (B9) -- node[pos=0.4, left=2pt] {\footnotesize$\begin{bmatrix}
        0_{n+1,n}\\Id_n
    \end{bmatrix}$} (M5);
    \draw[->] (M5) -- node[midway, right=2pt] {\footnotesize$\begin{bmatrix}
        Id_n &0_{n,n+1}
    \end{bmatrix}$} (B1);
    \end{tikzpicture}  
    \label{fig:C2g}
\end{figure}
\vspace{-2em}

\noindent where $v_1$ is the vector of the standard basis of $\Bbbk^n$ with a $1$ in its first entry. We are going to show that $\text{End}(M_n) \cong \Bbbk$, which implies that $M_n$ is a brick. Let $g\in \text{End}(M_n) \subseteq \prod_{i\in \tilde{Q}_0} \text{Mat}_{d_i\times d_i}(\Bbbk)$. 
We write 
$$g = (\underbrace{g_{1_+}, g_{1_-}}_{\varepsilon_1},\underbrace{g_2, \dots, g_s}_{\omega},\underbrace{g_{s+1},\dots,g_{s+t-1}}_{b''}),$$ 
\noindent where \( g_{1_+} \) and \( g_{1_-} \) denote the endomorphisms of the vector spaces at the split vertices \( 1_+ \) and \( 1_- \); \( g_2,\dots,g_s \) those associated with the vertices in \( \omega \); and \( g_{s+1},\dots,g_{s+t-1} \) those associated with the string \( \gamma_2 \cdots \gamma_{t-1} \).

Since $g$ is a map between representations, the following relations must be satisfied: 

\begin{gather}
    \begin{bmatrix}
        0_{n,n+1}&Id_n
    \end{bmatrix}g_2 = g_{1_+}\begin{bmatrix}
        0_{n,n+1}&Id_n
    \end{bmatrix} \\
    \begin{bmatrix}
        Id_n & Id_n & v_1
    \end{bmatrix}g_2 = g_{1_-}\begin{bmatrix}
        Id_n & Id_n & v_1
    \end{bmatrix} \\
    g_2 = \dots = g_s\quad  \\
    \begin{bmatrix}
        Id_n & 0_{n,n+1}
    \end{bmatrix}g_s = g_{s+1}\begin{bmatrix}
        Id_n & 0_{n,n+1}
    \end{bmatrix} \\
    \begin{bmatrix}
        0_{n+1,n} \\ Id_n
    \end{bmatrix} g_{s+t-1} = g_{s} \begin{bmatrix}
        0_{n+1,n} \\ Id_n
    \end{bmatrix} \\
    g_{s+1}= \dots = g_{s+t-1} 
\end{gather}

We let \vspace{-1em} \begin{gather*}
(\beta_{i,j})_{1\leq i,j \leq n} = g_{1_+}, \qquad (\gamma_{i,j})_{1\leq i,j \leq n} = g_{1_-}, \\
(\delta_{i,j})_{1\leq i,j \leq 2n+1} = g_{2}= \dots =g_s, \quad \text{and} \\
(\alpha_{i,j})_{1\leq i,j \leq n} = g_{s+1}=\dots= g_{s+t-1}.
\end{gather*}
    
\noindent By relation $(4)$, we have that $(\delta_{i,j})_{1\leq i \leq n, \ 1 \leq j \leq 2n+1}
    = \begin{bmatrix}
          g_{s+1} & 0_{n,n+1} 
    \end{bmatrix} $, that is:
    $$\begin{bmatrix}
        \delta_{1,1} & \dots & \delta_{1,n}& \delta_{1,n+1} &\dots & \delta_{1,2n+1}\\
        \vdots &&\vdots&\vdots&& \vdots\\
        \delta_{n,1} & \dots & \delta_{n,n}& \delta_{n,n+1} &\dots & \delta_{n,2n+1}
    \end{bmatrix}=\begin{bmatrix}
        \alpha_{1,1} & \dots & \alpha_{1,n}& 0 &\dots & 0\\
        \vdots &&\vdots&\vdots&& \vdots\\
        \alpha_{n,1} & \dots & \alpha_{n,n}& 0 &\dots & 0
    \end{bmatrix}, $$
    which implies that $\alpha_{i,j} = \delta_{i,j}$ for all $ 1\leq i,j \leq n$ and $\delta_{i,j} = 0$ for all $1\leq i \leq n$ and $ n+1\leq j \leq 2n+1$.

    Then we look at relation $(5)$, which implies that $\begin{bmatrix} 0_{n+1,n} \\ g_{s+t-1}
    \end{bmatrix} = (\delta_{i,j})_{\hspace{-0.5em}\substack{1\leq i \leq 2n+1\\ \ n+2 \leq j \leq 2n+1}}$, or:
    $$\begin{bmatrix}
        0&\dots&0\\
        \vdots &&\vdots\\
        0&\dots&0\\
        0&\dots&0\\
        \alpha_{1,1}&\dots&\alpha_{1,n}\\
        \vdots &&\vdots\\
        \alpha_{n,1}&\dots&\alpha_{n,n}\\
    \end{bmatrix} = \begin{bmatrix}
        \delta_{1,n+2}&\dots&\delta_{1,2n+1}\\
        \vdots &&\vdots\\
        \delta_{n,n+2}&\dots&\delta_{n,2n+1}\\
        \delta_{n+1,n+2}&\dots&\delta_{n+1,2n+1}\\
        \delta_{n+2,n+2}&\dots&\delta_{n+2,2n+1}\\
        \vdots &&\vdots\\
        \delta_{2n+1,n+2}&\dots&\delta_{2n+1,2n+1}\\
    \end{bmatrix}.$$

    We obtain that $\delta_{i,j} = 0$ for all $1\leq i \leq n+1$ and $n+2 \leq j \leq 2n+1$, and that $\alpha_{i,j} = \delta_{i+n+1,j+n+1}$ for all $ 1 \leq i,j \leq n$. Thanks to the previous relations, we get that $$g_s = \begin{bmatrix}
        \alpha_{1,1}&\dots&\alpha_{1,n}&0&0&\dots&0\\
        \vdots&&\vdots&\vdots&\vdots&&\vdots\\
        \alpha_{n,1}& \dots& \alpha_{n,n}&0&0&\dots&0\\
        \delta_{n+1,1}&\dots & \delta_{n+1,n}&\delta_{n+1,n+1}&0&\dots&0\\
        \delta_{n+2,1}&\dots&\delta_{n+2,n}&\delta_{n+2,n+1}& \alpha_{1,1}&\dots&\alpha_{1,n}\\
        \vdots&&\vdots&\vdots&\vdots&&\vdots\\
        \delta_{2n+1,1}&\dots&\delta_{2n+1,n}&\delta_{2n+1,n+1}& \alpha_{n,1}&\dots&\alpha_{n,n}\\
    \end{bmatrix}.$$
    Now, by using relation $(1)$, we also have that $
        (\delta_{i,j})_{\substack{n+2\leq i \leq 2n+1 \\ \ 1 \leq j \leq 2n+1}}
     =\begin{bmatrix}
        0_{n,n+1}&g_{1_+}
    \end{bmatrix}$: \\
    $${\setlength{\arraycolsep}{1.5pt}\begin{bmatrix}
        \delta_{n+2,1}&\dots&\delta_{n+2,n}&\delta_{n+2,n+1}& \alpha_{1,1}&\dots&\alpha_{1,n}\\
        \vdots&&\vdots&\vdots&\vdots&&\vdots\\
        \delta_{2n+1,1}&\dots&\delta_{2n+1,n}&\delta_{2n+1,n+1}& \alpha_{n,1}&\dots&\alpha_{n,n}\\
    \end{bmatrix}} = {\setlength{\arraycolsep}{2pt} \begin{bmatrix}
        0&\dots&0&0& \beta_{1,1}&\dots&\beta_{1,n}\\
        \vdots&&\vdots&\vdots&\vdots&&\vdots\\
        0&\dots&0&0& \beta_{n,1}&\dots&\beta_{n,n}\\
    \end{bmatrix}},$$
    {\noindent which implies that $\delta_{i,j} = 0$ for all $n+2 \leq i \leq 2n+1$ and $ 1 \leq j \leq n+1$ and that $g_{1_+} = g_{s+1}$. Finally, by $(2)$, we have that $\begin{bmatrix}
        Id_n & Id_n & v_1
    \end{bmatrix}g_2 = g_{1_-}\begin{bmatrix}
        Id_n & Id_n & v_1
    \end{bmatrix}$, which gives the following: $$\begin{tikzpicture}[baseline=(m.center)]
  \matrix (m) [matrix of math nodes,
    left delimiter={[},right delimiter={]},
    nodes={anchor=center,minimum width=1.5em,minimum height=2em},
    column sep=0pt]
  {
    \alpha_{1,1}+\delta_{n+1,1} & \dots & \alpha_{1,n}+\delta_{n+1,n} & \delta_{n+1,n+1} & \alpha_{n,1} & \dots & \alpha_{n,n-1} & \alpha_{n,n} \\
    \alpha_{2,1} & \dots & \alpha_{2,n} & 0 & \alpha_{1,1} & \dots & \alpha_{1,n-1} & \alpha_{1,n} \\
    &&&& \alpha_{2,1} & \dots & \alpha_{2,n-1} & \\
    \vdots & & \vdots & \vdots & \vdots & & \vdots & \vdots \\
    \alpha_{n,1} & \dots & \alpha_{n,n} & 0 & \alpha_{n-1,1} & \dots & \alpha_{n-1,n-1} & \alpha_{n-1,n} \\
  };
  \begin{scope}[on background layer]
    \node[fill=green!20,rounded corners,inner sep=0pt,fit=(m-2-1) (m-5-3)] {};
    \node[fill=gray!20,rounded corners,inner sep=0pt,fit=(m-2-4) (m-5-4)] {};
    \node[fill=yellow!30,rounded corners,inner sep=0pt,fit=(m-2-5) (m-5-7)] {};
    \node[fill=orange!30,rounded corners,inner sep=0pt,fit=(m-1-5) (m-1-7)] {};
    \node[fill=blue!20,rounded corners,inner sep=0pt,fit=(m-2-8) (m-5-8)] {};
    \node[fill=magenta!20,rounded corners,inner sep=0pt,fit=(m-1-8) (m-1-8)] {};
  \end{scope}
\end{tikzpicture}=$$  $$\begin{tikzpicture}[baseline=(m.center)]
  \matrix (m) [matrix of math nodes,
    left delimiter={[},right delimiter={]},
    nodes={anchor=center,minimum width=1.5em,minimum height=2em},
    column sep=0pt]
  {
    \gamma_{1,1}&& \dots&\gamma_{1,n}& \gamma_{1,1}& \gamma_{1,2}&\dots& \gamma_{1,n}&\gamma_{1,1}\\ \gamma_{2,1}&\gamma_{2,2}&\dots&\gamma_{2,n}&\gamma_{2,1}&\gamma_{2,2}&\dots&\gamma_{2,n}&\gamma_{2,1}\\
        \vdots&\vdots&&\vdots&\vdots&\vdots&&\vdots&\vdots\\
        \gamma_{n,1}&\gamma_{n,2}&\dots&\gamma_{n,n}&\gamma_{n,1}&\gamma_{n,2}&\dots&\gamma_{n,n}&\gamma_{n,1}\\
  };
  \begin{scope}[on background layer]
    \node[fill=green!20,rounded corners,inner sep=0pt,fit=(m-2-1) (m-4-4)] {};
    \node[fill=gray!20,rounded corners,inner sep=0pt,fit=(m-2-5) (m-4-5)] {};
    \node[fill=yellow!30,rounded corners,inner sep=0pt,fit=(m-2-6) (m-4-8)] {};
    \node[fill=orange!30,rounded corners,inner sep=0pt,fit=(m-1-6) (m-1-8)] {};
    \node[fill=blue!20,rounded corners,inner sep=0pt,fit=(m-2-9) (m-4-9)] {};
    \node[fill=magenta!20,rounded corners,inner sep=0pt,fit=(m-1-9) (m-1-9)] {};
  \end{scope} \end{tikzpicture}.$$
    \noindent By the relations given by the gray region, we get that $\gamma_{i,1}= 0$ for all $2 \leq i \leq n$.
    In particular, we have $\alpha_{i,1} = \gamma_{i,1} = 0$ for all $2 \leq i \leq n$, which is implied by the first column of the green region. But by the relations given together by the green and yellow regions, we get that $\alpha_{i,j} = \alpha_{i-1,j-1} $ for all $2 \leq i,j \leq n$; thus we can conclude that $\alpha_{i,j} = 0$ for all $1 \leq j <i \leq n$. 
    We also have that $\gamma_{i,1} = \alpha_{i-1,n} $ for all $2 \leq i\leq n$, implying that $\alpha_{i-1,n} = 0$, due to the relations given by the blue region. Together with the fact that $\alpha_{i,j} = \alpha_{i-1,j-1}$ for all $2 \leq i,j \leq n$, we can conclude $\alpha_{i,j} = 0$ for all $1 \leq i < j \leq n$.}
    \smallskip

    Moreover, by the relations given by the orange region, we get that $\alpha_{n,i-1} = \gamma_{1,i} $ for all  $2 \leq i\leq n$. We also have $\gamma_{1,i} = \alpha_{1,i} + \delta_{n+1,i} $ for all $1 \leq i\leq n$, but we already know that $\gamma_{1,i}= \alpha_{1,i} = 0$ for these $2 \leq i\leq n$, so $\delta_{n+1,i} = 0$ for all  $2 \leq i\leq n$. Lastly, the pink region implies that $\alpha_{n,n} = \gamma_{1,1} = \alpha_{1,1} + \delta_{n+1, 1}$ and $ \gamma_{1,1} = \delta_{n+1, n+1}$. But as we have seen $\alpha_{n,n} = \alpha_{1,1}$; which implies that $\alpha_{i,i} = \gamma_{i,i} = \delta_{n+1,n+1}$ for all $1 \leq i\leq n $ and that $\delta_{n+1,i} = 0$ for all  $1 \leq i\leq n$.
    We conclude that $g_{1_+}=g_{1_-}=g_{s+1}=\dots= g_{s+t-1} = k \cdot  Id_n$ and that $g_2 =\dots = g_s = k \cdot  Id_{2n+1} $ for some $k \in \Bbbk$, which implies that $\text{End}(M_n) \cong \Bbbk$.
\end{proof}

The only case left is when both vertices are special. In this case, it is necessary that $s \geq 1$, since it is impossible to have two special loops at the same vertex. The associated quiver $\tilde{Q}$ is the following.
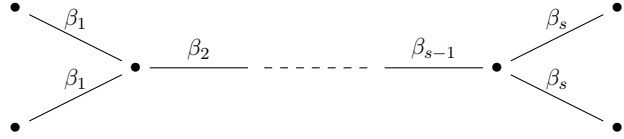
\begin{figure}[H]
    \centering
    \begin{tikzpicture}[scale = 0.8,transform shape]
    \node (A1) at (0,0) {};
    \node (B6) at (16,0) {};
    \node (M1) at (3,1) {$\bullet$};
    \node (M1') at (3,-1) {$\bullet$};
    \node (M2) at (5,0) {$\bullet$};
    \node (M3) at (7,0) {};
    \node (M4) at (9,0) {};
    \node (M5) at (11,0) {$\bullet$};
    \node (M6) at (13,1) {$\bullet$};
    \node (M6') at (13,-1) {$\bullet$};
    \draw[-] (M1) -- node[above] {$\beta_1$} (M2);
    \draw[-] (M1') -- node[above] {$\beta_1$} (M2);
    \draw[-] (M2) -- node[above] {$\beta_2$} (M3);
    \draw[dashed,-] (M3) -- (M4);
    \draw[-] (M4) -- node[above] {$\beta_{s-1}$} (M5);
    \draw[-] (M5) -- node[above] {$\beta_s$} (M6);
    \draw[-] (M5) -- node[above] {$\beta_s$} (M6');
\end{tikzpicture}
    \caption{The quiver $\tilde{Q}$ associated to case 2 $(iii)$.}
    \label{fig:C3S}
\end{figure}

\begin{proposition}
    The quiver $\tilde{Q}$ in Figure \ref{fig:C3S} has a family of infinitely-many bricks.
\end{proposition}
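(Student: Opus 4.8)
The plan is to reproduce, for the two-special-vertex configuration of Figure~\ref{fig:C3S}, the strategy of the preceding proposition: to construct an explicit family $\{M_n\}_{n\in\N}$ of representations of $\tilde Q$ whose total dimension grows with $n$, and to show that each $M_n$ is a brick. Bricks of distinct dimension cannot be isomorphic, so this immediately yields infinitely many bricks for $A/J$, and hence for $A$, over \emph{any} field with $\operatorname{char}(\Bbbk)\neq 2$. The use of a family of unbounded dimension is essential: for $s\geq 2$ the underlying graph of $\tilde Q$ is the tree $\tilde D_{s+2}$ and, since $\omega$ is a string, no relation of $\tilde I$ survives in $A/J$, so that $A/J$ is the path algebra of a quiver of affine type $\tilde D$---a tame hereditary algebra of infinite representation type. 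Its homogeneous regular modules form only a one-parameter family, which over a finite field is finite; the $M_n$ should instead be read as concrete models of the indecomposable preprojective modules, which exist in every dimension and are bricks over any field. (For $s=1$ the quiver is the four-arrow configuration between two split vertices and carries relations, but the same explicit family applies.)

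Concretely, I would place $\Bbbk^n$ at each of the four leaves $1_+,1_-,(s+1)_+,(s+1)_-$ and a space of dimension linear in $n$, say $\Bbbk^{2n+1}$, at each interior vertex $2,\dots,s$ of the central path; take every map along the path to be an identity; and at each fork use the split-and-shift maps of the preceding proposition, of the block shapes $\begin{bmatrix} 0_{n,n+1} & Id_n \end{bmatrix}$ and $\begin{bmatrix} Id_n & Id_n & v_1 \end{bmatrix}$, where $v_1$ is a standard basis vector, so that a cyclic shift is built into the representation. Since the only constraints imposed by $\tilde I$ are those already respected at the forks, checking that $M_n$ is a well-defined representation reduces to verifying the compatibility of the chosen blocks, which is immediate.

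The core of the argument is the computation $\operatorname{End}(M_n)\cong\Bbbk$. Writing an endomorphism as $g=(g_{1_+},g_{1_-},g_2,\dots,g_s,g_{(s+1)_+},g_{(s+1)_-})$, the identity maps along the path force $g_2=\cdots=g_s=:\delta$ for a single $(2n+1)\times(2n+1)$ matrix $\delta$. The four commutation relations at the two forks then constrain $\delta$: exactly as in the preceding proposition, the split maps impose a block Toeplitz shape on $\delta$ and the shift vector $v_1$ collapses the off-diagonal entries, the second fork performing the symmetric reduction from the other end. Together these leave $\delta=k\cdot Id_{2n+1}$ and all four leaf components equal to $k\cdot Id_n$ for a common $k\in\Bbbk$, whence $\operatorname{End}(M_n)\cong\Bbbk$ and $M_n$ is a brick.

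I expect this endomorphism computation to be the main obstacle. In the preceding proposition the rigidification of the central matrix was produced by a single split vertex cooperating with the oriented cycle $b''$; here there is no cycle, and the scalarity of $\delta$ must instead be forced by the interaction of the two split forks across the path. The delicate points are to confirm that the two shift and splitting conditions jointly leave no freedom beyond scalars---so that the construction genuinely produces a brick rather than merely an indecomposable---and to track the block decomposition $2n+1=n+n+1$ at both ends simultaneously, the splitting of each special vertex into its $\pm$ summands being where $\operatorname{char}(\Bbbk)\neq 2$ enters. The remaining steps (that each $M_n$ satisfies $\tilde I$ and that $\dim M_n$ is strictly increasing) are routine.
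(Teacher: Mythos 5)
Your construction fails at its core step: the proposed $M_n$ is not a brick --- it is not even indecomposable. Since all maps along the interior path are identities, any endomorphism satisfies $g_2=\dots=g_s=\delta$, and the two forks then impose on $\delta$ the \emph{identical} pair of conditions
$\begin{bmatrix}0_{n,n+1} & Id_n\end{bmatrix}\delta = h_+\begin{bmatrix}0_{n,n+1} & Id_n\end{bmatrix}$ and
$\begin{bmatrix}Id_n & Id_n & v_1\end{bmatrix}\delta = h_-\begin{bmatrix}Id_n & Id_n & v_1\end{bmatrix}$:
the ``symmetric reduction from the other end'' that your argument relies on never occurs, because the second fork literally repeats the first and adds no new constraint. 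Consequently $\operatorname{End}(M_n)$ is isomorphic to the endomorphism ring of the type $A_3$ representation $\Bbbk^n \xleftarrow{\begin{bmatrix}Id_n & Id_n & v_1\end{bmatrix}} \Bbbk^{2n+1} \xrightarrow{\begin{bmatrix}0_{n,n+1} & Id_n\end{bmatrix}} \Bbbk^n$ (both fork maps are surjective, so the leaf components are determined by $\delta$). A quiver of type $A_3$ is representation-finite with all indecomposables of dimension vector at most $(1,1,1)$, so this representation is decomposable for every $n\geq 1$. Concretely, the two fork maps have a nonzero common kernel (dimension count: $2n+1>2n$; for $n=1$ it contains $(1,-1,0)^T$), and placing that kernel at every interior vertex and $0$ at the leaves splits off a proper nonzero direct summand of $M_n$. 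What made the preceding proposition work was not the special fork alone but its interplay with the cycle $b''$: going around the cycle forces $g_{s+1}=\dots=g_{s+t-1}$, and the maps $\begin{bmatrix}Id_n & 0_{n,n+1}\end{bmatrix}$ and $\begin{bmatrix}0_{n+1,n}\\ Id_n\end{bmatrix}$ at vertex $s$ then identify the top-left block of $\delta$ with its bottom-right block shifted by $n+1$; in case (iii) there is no cycle, and duplicating the special-vertex fork does not recreate this shift.

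The paper's proof is structured quite differently and avoids this trap. For $s>1$ it gives no matrices at all: $\tilde{Q}$ is hereditary of type $\widetilde{D}_{s+2}$ (your observation that no relation of $\tilde{I}$ survives is correct here), and one invokes the classical fact that a tame hereditary algebra has infinitely many preprojective indecomposables, all of which are bricks. Your parenthetical treatment of $s=1$ is also wrong on both counts: there is no interior vertex when $s=1$, so ``the same explicit family'' has nowhere to live, and the algebra carries \emph{no} relations --- $\tilde{I}=0$, and $\tilde{Q}$ is the hereditary quiver of type $\widetilde{A}_3$ with two sources $1_\pm$ and two sinks. The paper handles this case with a genuinely different family, of dimension vector $(n,n,n,n+1)$, where the rigidity comes from the two embeddings $\begin{bmatrix}Id_n\\ 0\end{bmatrix}$ and $\begin{bmatrix}0\\ Id_n\end{bmatrix}$ into $\Bbbk^{n+1}$, offset by one row: that offset is what forces endomorphisms to be Toeplitz and then scalar. (A minor further point: $\operatorname{char}(\Bbbk)\neq 2$ plays no role in any such endomorphism computation; it is needed only for the classification theorem invoked earlier in the paper.)
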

\begin{proof}
    When $s > 1$, the quiver $\tilde{Q}$ is of type $\tilde{D}_{s+2}$ which is known to be brick-infinite. Indeed, it is a classical result that, since $\tilde{D}_{s+2}$ is a hereditary tame algebra, its preprojective component has infinitely many indecomposable modules, and that these are bricks \cite{ARS1995}. If $s=1$, the quiver $Q^{sp}$ and the corresponding quiver $\tilde{Q}$ are as follows:
    \vspace{-1.5em}
    \begin{figure}[H]
        \centering 
        \begin{tikzpicture}[baseline=(current bounding box.center)]
    \node (M) at (-2,0){$Q^{sp} \ :$};
    \node (M1) at (0,0) {$\bullet$};
    \node (M6) at (2,0) {$\bullet$};
    \draw[->] (M1) -- node[above] {$\beta_1$} (M6);
    \draw[->, looseness=8, in=125, out=55] (M1) to node[above] {$\varepsilon_{1}$}(M1);
    \draw[->, looseness=8, in=125, out=55] (M6) to node[above] {$\varepsilon_{2}$}(M6); \end{tikzpicture} \hspace{3em}
    \begin{tikzpicture}[baseline=(current bounding box.center)]
    \node (N1) at (6,0) {$\widetilde{Q} \ :$};
    \node (N2) at (8,-1) {$\bullet$};
    \node (N3) at (8,1) {$\bullet$};
    \node (N4) at (10,-1) {$\bullet$};
    \node (N5) at (10,1) {$\bullet$};
    \draw[->] (N2) -- node[below] {${}_-{\beta_1}_-$} (N4);
    \draw[->] (N3) -- node[above] {${}_+{\beta_1}_+$} (N5);
    \draw[->] (N3) -- node[right] {${}_-{\beta_1}_+$} (N4);
    \draw[->] (N2) -- node[left] {${}_+{\beta_1}_-$} (N5);
\end{tikzpicture}
    \end{figure}
\vspace{-1em}

In this case, an easy computation gives that the following representation $M_n$ give us an infinite family of bricks:
\begin{center}
    \begin{tikzcd}[ampersand replacement=\&, column sep = large, row sep = huge]
        \Bbbk^n \arrow[rr, "Id_n"] \arrow[rrd, "{\scriptstyle\begin{bmatrix} 0 \\ Id_n \end{bmatrix}}" {xshift = 0.95cm, yshift = -15pt}] \& \& \Bbbk^n \\
        \Bbbk^n \arrow[rru, crossing over, "Id_n" {xshift = -15pt, yshift = -8pt}] \arrow[rr, "{\scriptstyle\begin{bmatrix} Id_n \\ 0 \end{bmatrix}}"'] \& \& \Bbbk^{n+1}
    \end{tikzcd}
\end{center}
\end{proof}

\bibliographystyle{alpha}
\bibliography{biblio-skewgentle.bib}
\end{document}